\theoremstyle{definition}
\newtheorem{theorem}{Theorem}
\newtheorem{lemma}[theorem]{Lemma}
\newtheorem{proposition}[theorem]{Proposition}
\newtheorem{corollary}[theorem]{Corollary}
\newtheorem{definition}{Definition}
\def\N{\mathbb{N}}
\def\L{\mathscr{L}}
\def\M{\mathcal{M}}
\def\FV{FV}
\newcommand{\claim}[1]{\vspace{2 mm}\noindent \textbf{Claim #1} \hspace{2 mm}}
\begin{document}

\title{A machine that knows its own code}
\author{Samuel A.~Alexander\thanks{Email:
alexander@math.ohio-state.edu}\hphantom{*}\footnote{2010 Mathematics
Subject Classification: 03D80}\\
\emph{Department of Mathematics, the Ohio State University}}
\date{June 2014}
\maketitle

\begin{abstract}
We construct a machine that knows its own code, at the price of
not knowing its own factivity.
\end{abstract}

\section{Introduction}

It is well known that a suitably idealized mechanical ``knowing agent'' capable of logic, arithmetic, and
self-reflection, cannot know the index of a 
Turing machine that represents its own knowledge.
See Lucas \cite{lucas}, Benacerraf \cite{benacerraf},
Reinhardt \cite{reinhardt}, Penrose \cite{penrose}, Carlson \cite{carlson2000},
and Putnam \cite{putnam}.
However, the proofs always involve (in various
guises) the machine knowing its own factivity:
that the machine satisfies $K(K\phi\rightarrow\phi)$.
We will relax this requirement and explicitly construct a machine that knows
its own code.
The construction resembles that of \cite{carlson2000} and \cite{carlson2012}.

Our result should be compared with that of Carlson \cite{carlson2000},
who showed that a truthful knowing agent can know its own truth and know that it
has \emph{some} code, without knowing which code.  A machine can know
its own factivity as well as that it has some
code (without knowing which), or it can know its own code exactly
but not know its own factivity (despite actually being factive).
This dichotomy in machine knowledge was first presented in the author's
dissertation \cite{alexanderdissert}.

In Section \ref{prelimsect}, we discuss preliminaries.

In Section \ref{constsect}, we construct a machine and prove that it knows its own code.

%In Section \ref{parasect}, we situate the previous section's knowing machine in the context
%of epistemological paradoxes.

\section{Preliminaries}
\label{prelimsect}

We will work in the language $\L$ of Epistemic Arithmetic of S.\ Shapiro \cite{shapiro}.
This is the language of Peano arithmetic (with variables $x,y,z,\ldots$, constant symbol $0$,
unary function symbol $S$ for successor,
and binary function symbols $+$ and $\cdot$ for addition and multiplication), extended
by a modal operator $K$ for knowledge.
The well-formed formulas of $\L$ (and their free variables
$\phi\mapsto FV(\phi)$) are defined in the usual way; a formula of the form
$K(\phi)$ is called \emph{purely modal}, and will be written $K\phi$ if no confusion results.
Formulas without free variables are \emph{sentences}.
Terms, substitutability,
and the result $\phi(x|t)$ of substituting term $t$ for variable $x$ in $\phi$, are defined in the 
obvious ways.

We borrow the following semantics from T.J.\ Carlson \cite{carlson2000} (pp.\ 54--55).
We have reworded the definition in an equivalent form (except that Carlson allowed
for multiple operators while we need only one).
The intuition is that purely modal formulas should be
treated as much like propositional atoms as possible.

\begin{definition}
\label{baselogicdef}
(The Base Logic)
\begin{enumerate}
\item
If $U$ is some set, an \emph{assignment into $U$} is a function that maps variables of $\L$
into $U$.
\item If $s$ is an assignment into $U$, $x$ is a variable, and $u\in U$, $s(x|u)$ shall
mean the assignment into $U$ that agrees with $s$ except that it maps $x$ to $u$.
\item
An $\L$-\emph{structure} $\M$ consists of a first-order structure $\M_0$ for the 
first-order part
of $\L$, together with a function that takes one assignment $s$ (into the universe of $\M_0$)
and one purely modal formula $K\phi$, and outputs either True or False---in which case we 
write
$\M\models K\phi[s]$ or $\M\not\models K\phi[s]$, respectively---satisfying
the following three constraints:
\begin{enumerate}
\item Whether or not $\M\models K\phi[s]$
does not depend on $s(x)$ if $x$ is not a free variable of $\phi$.
\item If $\psi$ is an \emph{alphabetic variant}
of $\phi$ (meaning that $\psi$ is obtained from $\phi$ by
renaming bound variables so as to respect the
binding of the quantifiers) then, for any assignment $s$,
$\M\models K\phi[s]$ if and only if $\M\models K\psi[s]$.
\item (Weak Substitution)\footnote{The full Substitution Lemma,
where variable $y$ is replaced by an arbitrary term $t$,
is not generally valid in modal logic.} If $x$ and $y$ are variables, $K\phi$ is a modal formula,
$y$ is substitutable for $x$ in $\phi$,
and $s$ is an assignment, then $\M\models K\phi(x|y)[s]$
if and only if $\M\models K\phi[s(x|s(y))]$.
\end{enumerate}
\item From this, for any formula $\phi$, $\M\models \phi[s]$ and $\M\not\models\phi[s]$ are defined
in the usual inductive way.  We say $\M\models\phi$ if $\M\models\phi[s]$ for every assignment 
$s$.
\item If $\Sigma$ is a set of $\L$-sentences and $\phi$ is
an $\L$-formula, we write $\Sigma\models \phi$
to indicate that for every $\L$-structure $\M$, if $\M\models \Sigma$ (meaning $\M\models \sigma$ 
for every $\sigma\in\Sigma$) then $\M\models\phi$.
\item An $\L$-formula $\phi$ is \emph{valid} if $\emptyset\models\phi$.
\end{enumerate}
\end{definition}

\begin{lemma}
\label{lemma1}
(Completeness and compactness)
\begin{enumerate}
\item The set of valid $\L$-formulas is r.e.
\item For any r.e.\ set $\Sigma$ of $\L$-sentences,
$\{\phi\,:\,\Sigma\models\phi\}$ is r.e.
\item There is an effective procedure that, given (a G\"{o}del number of) an r.e.\ 
set $\Sigma$ of $\L$-sentences, outputs (a G\"{o}del number of)
$\{\phi\,:\,\Sigma\models\phi\}$.
\item If $\Sigma$ is a set of $\L$-sentences and $\Sigma\models\phi$,
there is a finite set $\sigma_1,\ldots,\sigma_n\in\Sigma$
such that\footnote{Throughout
the paper, $A\rightarrow B\rightarrow C$
is shorthand for $A\rightarrow (B\rightarrow C)$, and similar for longer implication chains.} 
$\sigma_1\rightarrow\cdots\rightarrow\sigma_n\rightarrow\phi$ is 
valid.
\end{enumerate}
\end{lemma}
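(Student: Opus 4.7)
The plan is to reduce the base logic to classical first-order logic by treating purely modal formulas as atomic predicates, and then invoke the standard completeness and compactness theorems.

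First I introduce a first-order language $\L^*$ extending the first-order fragment of $\L$ by adjoining, for each purely modal formula $K\phi$ (identified with a fixed canonical representative of its alphabetic-variance class), a fresh predicate symbol $P_{K\phi}$ of arity $|\FV(\phi)|$. A computable translation $\tau:\L\to\L^*$ is defined recursively by leaving first-order connectives and quantifiers alone and replacing each maximal purely modal subformula $K\phi$ by $P_{K\phi}(x_1,\ldots,x_n)$, where $x_1,\ldots,x_n$ enumerates $\FV(\phi)$ in a canonical order.

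Next I encode the semantic constraints (a)--(c) of Definition \ref{baselogicdef} as an r.e.\ set $T$ of $\L^*$-sentences. Constraint (a) is automatic from the translation; constraint (b) is handled by the choice of canonical representatives; and constraint (c) becomes a schema of universally quantified biconditionals relating $P_{K\phi(x|y)}$ and $P_{K\phi}$, one for each triple $(K\phi,x,y)$ with $y$ substitutable for $x$ in $\phi$. A straightforward check then shows that the map $\M\mapsto\M^\tau$, where $\M^\tau$ interprets $P_{K\phi}(\vec{a})$ as $\M\models K\phi[s]$ for any $s$ sending the free variables of $\phi$ to $\vec{a}$, is a bijection between $\L$-structures and models of $T$, and moreover $\M\models\phi$ iff $\M^\tau\models\tau(\phi)$.

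Given this faithful translation, the four parts of the lemma reduce to standard FOL results. Part (1): valid $\L$-formulas are those $\phi$ with $T\models\tau(\phi)$, an r.e.\ set by classical completeness applied to the r.e.\ set $T$. Parts (2) and (3) follow from the uniform equivalence $\Sigma\models\phi$ iff $T\cup\tau[\Sigma]\models\tau(\phi)$, combined with the effectiveness of $\tau$, of constructing $T$, and of enumerating FOL-consequences. Part (4) uses classical compactness: if $\Sigma\models\phi$, finitely many $\sigma_1,\ldots,\sigma_n\in\Sigma$ together with finitely many axioms from $T$ classically imply $\tau(\phi)$; the $T$-axioms can be absorbed because every $\L$-structure satisfies $T$, leaving $\sigma_1\rightarrow\cdots\rightarrow\sigma_n\rightarrow\phi$ $\L$-valid.

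The main obstacle I anticipate is formulating the weak-substitution schema carefully, so that it is simultaneously r.e., strong enough that every model of $T$ arises from an honest $\L$-structure, and not so strong as to exclude any $\L$-structure---in particular, matching up the argument lists of $P_{K\phi(x|y)}$ and $P_{K\phi}$ correctly when the free-variable lists of $\phi$ and $\phi(x|y)$ differ by the swap of $x$ for $y$.
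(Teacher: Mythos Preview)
Your proposal is correct and is exactly the standard reduction the paper has in mind: the paper's own proof is the single word ``Straightforward,'' deferring implicitly to Carlson~\cite{carlson2000}, whose semantics (Definition~\ref{baselogicdef}) is designed precisely so that purely modal formulas behave like first-order atoms and the base logic inherits completeness and compactness from classical first-order logic. Your translation-to-$\L^*$ argument, with the weak-substitution schema encoded as the r.e.\ theory $T$, is the natural way to make that one-word proof honest, and the obstacle you flag (bookkeeping the argument lists of $P_{K\phi(x|y)}$ versus $P_{K\phi}$) is real but routine once a canonical ordering of free variables is fixed.
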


\begin{proof}
Straightforward.
\end{proof}

\begin{definition}
The \emph{axioms of Peano arithmetic for $\L$}
consist of the axioms of Peano arithmetic, with the induction schema extended to
$\L$.  To be precise, the axioms of Peano arithmetic for $\L$ are as follows.
\begin{enumerate}
\item $\forall x(S(x)\not=0)$.
\item $\forall x\forall y(S(x)=S(y)\rightarrow x=y)$.
\item $\forall x(x+0=x)$.
\item $\forall x\forall y(x+S(y)=S(x+y))$.
\item $\forall x(x\cdot 0=0)$.
\item $\forall x\forall y(x\cdot S(y)=x\cdot y + x)$.
\item The universal closure of $\phi(x|0)\rightarrow (\forall x(\phi\rightarrow \phi(x|S(x))))
\rightarrow \forall x \phi$ for any $\L$-formula $\phi$.
\end{enumerate}
\end{definition}

\begin{definition}
\label{axiomsomnibus}
\item
\begin{itemize}
\item
The \emph{pre-closure axioms of knowledge} are given by the following schemata.
\begin{itemize}
\item $E1$: The universal closure of $K\phi$ whenever $\phi$ is valid.
\item $E2$: The universal closure of $K(\phi\rightarrow\psi)\rightarrow K\phi\rightarrow 
K\psi$.
\item $E3$: The universal closure of $K\phi\rightarrow\phi$.
\item $E4$: The universal closure of $K\phi\rightarrow KK\phi$.
\end{itemize}
\item
The \emph{axioms of knowledge} consist of the pre-closure axioms of knowledge along with $K\phi$
whenever $\phi$ is a pre-closure axiom of knowledge.
\item
The axioms of \emph{epistemic arithmetic} consist of the pre-closure axioms of knowledge along 
with $K\phi$ 
whenever $\phi$ is a pre-closure axiom of knowledge or $\phi$ is an axiom of Peano arithmetic for 
$\L$.
\item
The \emph{axioms of knowledge mod factivity} consist of the pre-closure axioms of knowledge along 
with $K\phi$ whenever $\phi$ is an instance of $E1$, $E2$, or $E4$.
\item
The axioms of \emph{epistemic arithmetic mod factivity}
consist of the pre-closure axioms of knowledge along with $K\phi$ whenever $\phi$ is an instance 
of $E1$, $E2$, $E4$, or an axiom of Peano arithmetic for $\L$.
\end{itemize}
\end{definition}

\begin{definition}
By \emph{Reinhardt's schema} we mean the following schema (\cite{reinhardt}, p.~327)
\begin{itemize}
\item $\exists e K\forall x(K\phi\leftrightarrow x\in W_e)$, whenever
$FV(\phi)\subseteq\{x\}$.
\end{itemize}
\end{definition}

Reinhardt demonstrated that a formalization of ``I am a Turing machine and I know which one'' 
cannot be consistent with epistemic arithmetic.  To do this, he found a 
particular instance of Reinhardt's schema that was inconsistent with epistemic arithmetic.
A truthful mechanical knowing agent that knows its own code necessarily knows all
instances of Reinhardt's schema (for example,
suppose $\phi$ is the formula ``the $x$th Turing machine runs forever'';
if I know my own code, I can deduce a code
for the set of those $n\in\N$ such that I know the $n$th Turing machine runs forever).

We will show that Reinhardt's schema is consistent (in fact, $\omega$-consistent, by which we
mean it has a structure with universe $\N$ where the symbols of Peano arithmetic are given the 
usual interpretations) with epistemic arithmetic mod factivity.

This result should be compared with the main result of \cite{carlson2000}.
Along with the above schema, Reinhardt introduced (\cite{reinhardt}, p.~320) a weaker schema, 
Reinhardt's
\emph{strong 
mechanistic thesis}, $K\exists e\forall x(K\phi\leftrightarrow x\in W_e)$.\footnote{Reinhardt 
lists only $\exists e\forall x(K\phi\leftrightarrow x\in W_e)$,
and $K\exists e\forall x(K\phi\leftrightarrow x\in W_e)$ follows by the rule of
necessitation.  Clearly the latter formula is what is important.
Reinhardt originally referred to this as the Post-Turing thesis, and later
decided on the name \emph{strong mechanistic thesis} (see \cite{carlson2000} p.~54).}
Reinhardt conjectured, and Carlson proved\footnote{This was accomplished using deep structural 
theorems on the ordinal
numbers \cite{carlson1999}, later organized into patterns of resemblance \cite{carlson2001}.}, that 
the strong mechanistic thesis is consistent
with epistemic arithmetic.
Thus we have a dichotomy: a truthful knowing machine can know it is \emph{some} machine (but not 
which one), and also know itself to be truthful; alternatively, a truthful knowing machine can know 
precisely which machine it is, but not know itself to be truthful.

\section{The Construction}
\label{constsect}

\begin{definition}
Suppose $\phi$ is an $\L$-sentence and $s$ is an assignment into $\N$.
We define $\phi^s$ to be the sentence
\[
\phi^s=\phi(x|\overline{s(x)})(y|\overline{s(y)})\cdots
\]
obtained by replacing each free variable in $\phi$ by a numeral for
the natural number it is assigned to.
\end{definition}

For example, if $s(x)=0$ and $s(y)=2$, then $(x=y)^s$ is
the sentence $(0=S(S(0)))$.

The machine we construct will have the following form for a certain well-chosen set $\Sigma$.

\begin{definition}
\label{Msubsigmadef}
If $\Sigma$ is a set of $\L$-sentences, let $\M_\Sigma$ be the $\L$-structure with
universe $\N$, in which symbols of Peano arithmetic are interpreted in the usual way,
and in which knowledge is interpreted so that for all $\L$-formulas $\phi$ and
assignments $s$ into $\N$,
\[
\mbox{$\M_\Sigma\models K\phi[s]$ iff $\Sigma\models\phi^s.$}
\]
\end{definition}

\begin{lemma}
For any $\Sigma$ as in Definition \ref{Msubsigmadef}, $\M_\Sigma$
really is an $\L$-structure.
\end{lemma}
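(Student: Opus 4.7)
The plan is to verify, directly from Definition \ref{baselogicdef}, that the stipulated interpretation of purely modal formulas in $\M_\Sigma$ satisfies constraints (a), (b), and (c). The key observation is that the definition reduces the question ``$\M_\Sigma\models K\phi[s]$'' to the purely syntactic/semantic question ``$\Sigma\models \phi^s$'' about the sentence $\phi^s$, so each constraint translates into a statement about how $\phi^s$ depends on $\phi$ and $s$.

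For constraint (a), if $x\notin FV(\phi)$ then the operation $\phi\mapsto\phi^s$ never touches $s(x)$, since it only substitutes numerals for free variables of $\phi$; hence $\phi^s=\phi^{s(x|u)}$ for every $u\in\N$, and the truth value of $\Sigma\models\phi^s$ clearly does not depend on $s(x)$. For constraint (b), I would check by a straightforward induction on $\phi$ that if $\psi$ is an alphabetic variant of $\phi$ then $\psi^s$ is an alphabetic variant of $\phi^s$; then I would invoke the fact that alphabetic variants are equivalent in every $\L$-structure (a property guaranteed by constraint (b) applied to arbitrary $\L$-structures $\M'$ satisfying $\Sigma$) to conclude $\Sigma\models\phi^s$ iff $\Sigma\models\psi^s$.

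Constraint (c) is the one that will require the most care. Here I would prove by induction on the structure of $\phi$ that, whenever $y$ is substitutable for $x$ in $\phi$, the two sentences $(\phi(x|y))^s$ and $\phi^{s(x|s(y))}$ coincide: in both, free occurrences of $x$ in $\phi$ become the numeral $\overline{s(y)}$, while every other free variable $z$ of $\phi$ becomes $\overline{s(z)}$. Substitutability is exactly what prevents variable capture from disturbing this equality through quantifiers. Once the two sentences are identified, $\Sigma\models (\phi(x|y))^s$ iff $\Sigma\models\phi^{s(x|s(y))}$, which is precisely constraint (c).

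The main obstacle will be bookkeeping in the verification of (c): one must treat several cases depending on whether $x$ and $y$ are free in $\phi$ and, in the inductive step for quantifiers, whether the bound variable coincides with $x$ or $y$. The substitutability hypothesis handles the quantifier case cleanly, so no genuinely hard logical content arises — the proof is really a careful syntactic bookkeeping exercise combined with the elementary semantic remark that every $\L$-structure satisfies (a), (b), (c) by assumption.
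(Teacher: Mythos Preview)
Your proposal is correct and follows essentially the same route as the paper's own proof: for each of (a), (b), (c) you reduce the question about $\M_\Sigma\models K\phi[s]$ to a syntactic fact about $\phi^s$, using the same key identities ($\phi^s$ independent of $s(x)$ when $x\notin FV(\phi)$; $\psi^s$ an alphabetic variant of $\phi^s$; and $(\phi(x|y))^s\equiv\phi^{s(x|s(y))}$), and then invoke logical equivalence of alphabetic variants in arbitrary $\L$-structures. The only cosmetic difference is that the paper phrases the (b) step as ``$\psi\leftrightarrow\phi$ is valid'' rather than appealing directly to constraint (b) in other structures, but these amount to the same observation.
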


\begin{proof}
We must verify the conditions on $\M_\Sigma\models K\phi[s]$ from Definition 
\ref{baselogicdef}.  Let $s$ be an assignment into $\N$.

\begin{itemize}
\item (a)
If $x$ is not free in $\phi$, then $\phi^s$ does not depend on $s(x)$, so neither does
$\Sigma\models\phi^s$, so neither does $\M_\Sigma\models K\phi[s]$.
\item (b)
An easy inductive argument shows that any time $\psi$ is an alphabetic variant of $\phi$,
for any assignment $s$ into $\N$, $\psi^s$ is an alphabetic variant of $\phi^s$.
Another easy induction shows that whenever $\psi$ is an alphabetic variant of $\phi$,
$\psi\leftrightarrow\phi$ is valid, so certainly $\Sigma\models \phi\leftrightarrow\psi$.
It follows that (when $\psi$ is an alphabetic variant of $\phi$)
$\M_\Sigma\models K\phi[s]$ if and only if $\M_\Sigma\models K\psi[s]$.
\item (c)
(Weak Substitution)
Let $x$ and $y$ be variables.
An easy inductive argument shows that for all assignments $t$ into $\N$
and all formulas $\phi$ such that $y$ is substitutable for $x$ in $\phi$, $\phi(x|y)^t\equiv 
\phi^{t(x|t(y))}$.
By definition $\M_\Sigma\models K\phi(x|y)[s]$
if and only if $\Sigma\models \phi(x|y)^s$, which holds if and only if
$\Sigma\models \phi^{s(x|s(y))}$,
which is true if and only if $\M_\Sigma\models K\phi[s(x|s(y))]$.
\end{itemize}
\end{proof}

\begin{lemma}
\label{claim1}
For any $\Sigma$ as in Definition \ref{Msubsigmadef},
any $\L$-formula $\phi$, and any assignment $s$,
$\M_\Sigma\models\phi[s]$ if and only if $\M_\Sigma\models\phi^s$.
\end{lemma}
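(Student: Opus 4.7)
The plan is to prove the equivalence by induction on the complexity of $\phi$.  At each inductive step I will rely on the syntactic fact that, because we are substituting numerals (closed terms) for free variables, the substitutions involved in forming $\phi^s$ commute freely with the logical connectives, quantifiers, and even the $K$ operator, without any capture issues; this is the engine that makes the induction go through.

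\textbf{Atomic case.}  When $\phi$ is an equation $t_1 = t_2$ between terms of Peano arithmetic, the equivalence reduces to the standard first-order substitution lemma: since $\M_\Sigma$ interprets $0,S,+,\cdot$ in the usual way and each numeral $\overline{s(x)}$ denotes $s(x)$, the terms $t_i^s$ and $t_i[s]$ evaluate to the same element of $\N$.

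\textbf{Boolean connectives.}  If $\phi$ is $\neg\psi$, $\psi_1\wedge\psi_2$, etc., one uses the identities $(\neg\psi)^s=\neg(\psi^s)$ and $(\psi_1\wedge\psi_2)^s=\psi_1^s\wedge\psi_2^s$, together with the inductive hypothesis applied to the proper subformulas and the usual semantic clauses in Definition~\ref{baselogicdef}(4).

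\textbf{Quantifier case.}  Suppose $\phi=\forall x\,\psi$.  Since $FV(\phi)=FV(\psi)\setminus\{x\}$, the sentence $(\forall x\,\psi)^s$ is of the form $\forall x\,\psi'$ where $\psi'$ is obtained from $\psi$ by replacing each free variable other than $x$ by a numeral.  Because the substituted terms are numerals (which contain no variables), substitution commutes, so $\psi'(x|\overline{n})$ equals $\psi^{s(x|n)}$ for every $n\in\N$.  Then $\M_\Sigma\models(\forall x\,\psi)[s]$ iff for every $n$ we have $\M_\Sigma\models\psi[s(x|n)]$, which by the inductive hypothesis is iff $\M_\Sigma\models\psi^{s(x|n)}$, iff $\M_\Sigma\models\psi'(x|\overline{n})$ for every $n$, iff $\M_\Sigma\models(\forall x\,\psi)^s$.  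The existential case is analogous.

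\textbf{Modal case.}  Suppose $\phi=K\psi$.  The free variables of $K\psi$ are those of $\psi$, and again because numerals are closed terms we may identify $(K\psi)^s$ with $K(\psi^s)$.  Unfolding the defining clause of $\M_\Sigma$, the left-hand side $\M_\Sigma\models K\psi[s]$ holds iff $\Sigma\models\psi^s$.  For the right-hand side, $\M_\Sigma\models(K\psi)^s$ means $\M_\Sigma\models K(\psi^s)[s']$ for every assignment $s'$; by the defining clause this is $\Sigma\models(\psi^s)^{s'}$, but $\psi^s$ is already a sentence so $(\psi^s)^{s'}=\psi^s$, and both sides collapse to the single condition $\Sigma\models\psi^s$.

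The main point requiring care, rather than genuine difficulty, is the syntactic bookkeeping that $\phi\mapsto\phi^s$ commutes with the formation of $\neg$, $\wedge$, $\forall x$ (for $x\notin FV(\phi)$ after the substitution, which is automatic since numerals have no free variables), and with $K$.  Once these commutations are recorded, the modal case is in fact the cleanest: it is handled directly by the definition of $\M_\Sigma$ rather than by an appeal to the inductive hypothesis.
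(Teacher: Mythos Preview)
Your proof is correct and follows the same induction on formula complexity as the paper, which singles out the modal case as the interesting one and handles it exactly as you do (using that $\psi^s$ is a sentence so $(\psi^s)^{s'}=\psi^s$). Your observation that the modal case needs only the definition of $\M_\Sigma$ and not the inductive hypothesis is a nice extra remark the paper leaves implicit.
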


\begin{proof}
By induction on formula complexity of $\phi$.
The most interesting case is when $\phi$ is $K\phi_0$ for some
formula $\phi_0$.
Suppose $\M_\Sigma\models K\phi_0[s]$,
so $\Sigma\models\phi_0^s$.
If we let $t$ be an arbitrary assignment, since $\phi_0^s$ is a sentence,
$\phi_0^s\equiv (\phi_0^s)^t$ and thus $\Sigma\models (\phi_0^s)^t$.
By definition this means $\M_\Sigma\models K\phi_0^s[t]$.
By arbitrariness of $t$, $\M_\Sigma\models K\phi_0^s$.
The converse is similar.
\end{proof}

\begin{lemma}
\label{e2lem}
For any $\Sigma$ as in Definition \ref{Msubsigmadef},
$\M_\Sigma$ satisfies all instances of $E2$.
\end{lemma}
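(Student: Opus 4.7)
The plan is to unwind the definition of $\M_\Sigma$ so that verifying an $E2$ instance reduces to a modus ponens argument at the level of the semantic consequence relation $\Sigma\models\cdot$. Since $E2$ is asserted as a universal closure, it suffices to show that for every assignment $s$ and all $\L$-formulas $\phi,\psi$, if $\M_\Sigma\models K(\phi\rightarrow\psi)[s]$ and $\M_\Sigma\models K\phi[s]$, then $\M_\Sigma\models K\psi[s]$.

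By Definition \ref{Msubsigmadef}, the two hypotheses translate to $\Sigma\models(\phi\rightarrow\psi)^s$ and $\Sigma\models\phi^s$, and the desired conclusion to $\Sigma\models\psi^s$. The first step I would carry out is a small syntactic lemma (essentially by inspection of the definition of the operation $(\cdot)^s$): for any formulas $\alpha,\beta$ and any assignment $s$ into $\N$,
\[
(\alpha\rightarrow\beta)^s \;\equiv\; \alpha^s\rightarrow\beta^s.
\]
This is immediate because $(\cdot)^s$ replaces each free variable of the input formula by a numeral, and a variable is free in $\alpha\rightarrow\beta$ exactly when it is free in $\alpha$ or in $\beta$; variables not free in a particular conjunct are unaffected by the replacement there.

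Applying this with $\alpha=\phi$, $\beta=\psi$, the first hypothesis becomes $\Sigma\models\phi^s\rightarrow\psi^s$. Combined with $\Sigma\models\phi^s$, classical modus ponens at the semantic level (every $\L$-structure satisfying $\Sigma$ satisfies both $\phi^s$ and $\phi^s\rightarrow\psi^s$, hence $\psi^s$) yields $\Sigma\models\psi^s$. By the clause of Definition \ref{Msubsigmadef} defining $K$, this says exactly $\M_\Sigma\models K\psi[s]$, as required. Finally, since $s$ was arbitrary, $\M_\Sigma$ satisfies the universal closure and hence every instance of $E2$.

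There is no real obstacle here; the only thing to be careful about is the bookkeeping for the substitution $(\cdot)^s$ on an implication when the two subformulas have different sets of free variables, which is handled by the syntactic lemma above.
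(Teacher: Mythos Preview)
Your proof is correct and follows essentially the same approach as the paper: unfold the definition of $K$ in $\M_\Sigma$, use the syntactic identity $(\phi\rightarrow\psi)^s\equiv\phi^s\rightarrow\psi^s$, and apply modus ponens at the level of $\Sigma\models\cdot$. The paper's argument is more terse but otherwise identical.
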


\begin{proof}
Let $s$ be an assignment and 
suppose $\M_\Sigma\models K(\phi\rightarrow\psi)[s]$
and $\M_\Sigma\models K\phi[s]$.
This means $\Sigma\models (\phi\rightarrow\psi)^s$
and $\Sigma\models \phi^s$.
Clearly $(\phi\rightarrow\psi)^s\equiv \phi^s\rightarrow\psi^s$,
so by modus ponens, $\Sigma\models \psi^s$,
so $\M_\Sigma\models \psi[s]$.
\end{proof}

\begin{lemma}
\label{palem}
For any $\Sigma$ as in Definition \ref{Msubsigmadef},
$\M_\Sigma$ satisfies the axioms of Peano arithmetic for $\L$.
\end{lemma}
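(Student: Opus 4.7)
The plan is to split the verification into the six first-order Peano axioms (items 1--6) and the extended induction schema (item 7). Since $\M_\Sigma$ has universe $\N$ and interprets $0,S,+,\cdot$ in the standard way, items 1--6 are axioms of the standard model of arithmetic and hold automatically; no modal reasoning enters, since none of these axioms contains $K$.

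The real work is the induction schema for an arbitrary $\L$-formula $\phi$, which may contain $K$ (and may have free variables besides $x$). The clean approach is meta-induction on $\N$: fix $\phi$ and an assignment $s$ into $\N$, assume $\M_\Sigma\models \phi(x|0)[s]$ and $\M_\Sigma\models \forall x(\phi\rightarrow\phi(x|S(x)))[s]$, and show by external induction on $n\in\N$ that $\M_\Sigma\models \phi[s(x|n)]$. For the base case, I would use Weak Substitution (part (c) of Definition~\ref{baselogicdef}, whose analog for general formulas is the ordinary substitution lemma, provable by an easy side induction that is essentially the computation done in the previous lemma) to move from $\M_\Sigma\models \phi(x|0)[s]$ to $\M_\Sigma\models \phi[s(x|0)]$; then quoting the universally closed inductive step with the assignment $s(x|n)$, applying Weak Substitution on the right-hand side to read $\phi(x|S(x))$ as $\phi$ evaluated at $s(x|n+1)$, gives the successor step. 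Taking the universal closure at the end absorbs the other free variables of $\phi$.

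The one point that could be mistaken for an obstacle is the presence of the modal operator inside $\phi$: one might worry that nested $K$-clauses interact badly with meta-induction. They do not, because $\M_\Sigma\models\psi[s]$ is defined inductively on formula complexity using the standard clauses for $\neg,\wedge,\forall$ etc., with only the atomic clause $\M_\Sigma\models K\psi_0[s]\Leftrightarrow \Sigma\models \psi_0^s$ being nonstandard; and that clause already respects the substitution property (shown in the proof of Lemma~\ref{claim1} and in checking part (c) of the structure definition). So the propagation of truth across $s(x|n)$ and $s(x|n+1)$ behaves exactly as in a first-order structure, and the meta-induction on $\N$ goes through without any appeal to facts about $\Sigma$. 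This makes the proof essentially routine once one has Lemma~\ref{claim1} in hand.
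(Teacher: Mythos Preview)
Your proposal is correct and follows essentially the same route as the paper: split off the six algebraic axioms as automatic, then prove the induction schema by external induction on $n\in\N$ over the truth of $\phi$ at the modified assignments $s(x|n)$. The only cosmetic difference is that the paper carries out the substitution steps by explicitly passing through the closed sentences $\phi^{s(x|n)}$ via Lemma~\ref{claim1} (using the identities $\phi(x|0)^s\equiv\phi^{s(x|0)}$ and $\phi(x|S(x))^{s(x|m)}\equiv\phi^{s(x|m+1)}$), whereas you phrase the same manoeuvre as a single ``substitution lemma'' step and point to Lemma~\ref{claim1} as its justification; since the term-for-variable substitution you need is exactly what that round trip through $\phi^s$ provides in $\M_\Sigma$, the two presentations coincide.
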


\begin{proof}
Let $\M=\M_\Sigma$.
Let $\psi$ be an axiom of Peano arithmetic.  If $\psi$ is any other axiom
besides an instance of induction, $\M\models\psi$ because $\M$ has universe
$\N$ and interprets the symbols of Peano arithmetic in the intended ways.
But suppose $\psi$ is a universal closure of
\[
\phi(x|0)\rightarrow (\forall x(\phi\rightarrow \phi(x|S(x))))\rightarrow\forall x\phi.
\]
Let $s$ be an assignment and assume $\M\models \phi(x|0)[s]$ and $\M\models \forall 
x(\phi\rightarrow \phi(x|S(x)))[s]$.  We must show $\M\models \forall x\phi[s]$.

Since $\M\models \phi(x|0)[s]$, Lemma \ref{claim1} says $\M\models \phi(x|0)^s$.
Clearly $\phi(x|0)^s\equiv \phi^{s(x|0)}$, so $\M\models \phi^{s(x|0)}$.

For each $m\in\N$, since $\M\models \forall x(\phi\rightarrow \phi(x|S(x)))[s]$,
in particular $\M\models\phi\rightarrow\phi(x|S(x))[s(x|m)]$.
And thus, \emph{if} $\M\models \phi[s(x|m)]$, then $\M\models \phi(x|S(x))[s(x|m)]$.
By Lemma \ref{claim1}, that last sentence can be rephrased:
\emph{if} $\M\models \phi^{s(x|m)}$, then $\M\models \phi(x|S(x))^{s(x|m)}$;
but clearly $\phi(x|S(x))^{s(x|m)}\equiv \phi^{s(x|m+1)}$, so in summary:
\begin{itemize}
\item $\M\models \phi^{s(x|0)}$.
\item For each $m\in\N$, if $\M\models \phi^{s(x|m)}$, then
$\M\models \phi^{s(x|m+1)}$.
\end{itemize}
Therefore, by mathematical induction, $\M\models \phi^{s(x|m)}$ for every $m\in\N$.
By Lemma \ref{claim1}, for all $m\in\N$, $\M\models \phi[s(x|m)]$.
So $\M\models \forall x\phi[s]$, as desired.
\end{proof}

\begin{lemma}
\label{e4lem}
Suppose $\Sigma$ (as in Definition \ref{Msubsigmadef}) is
\emph{closed under $K$}, by which we mean that for every $\phi\in\Sigma$,
$K\phi\in\Sigma$.
Furthermore, assume $\Sigma$ contains all instances of $E1$ and $E2$ from Definition
\ref{axiomsomnibus}.
Then $\M_\Sigma$ satisfies all instances of $E4$.
\end{lemma}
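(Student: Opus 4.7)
The plan is to unwind the definitions and then use the compactness part of Lemma~\ref{lemma1} together with $E1$, $E2$, and the closure of $\Sigma$ under $K$ to transfer derivability in $\Sigma$ into $K$-derivability in $\Sigma$. Fix a formula $\phi$ and assignment $s$, and assume $\M_\Sigma\models K\phi[s]$; by definition this is $\Sigma\models\phi^s$. Since $FV(K\phi)=FV(\phi)$ and the variables replaced in forming $(\cdot)^s$ are exactly those free variables, replaced by numerals (which are closed terms), the substitution commutes with the $K$ in front, so $(K\phi)^s$ is literally $K(\phi^s)$. Thus what I need to establish is $\Sigma\models K(\phi^s)$, which by the definition of $\M_\Sigma$ is equivalent to $\M_\Sigma\models KK\phi[s]$.

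Next, I apply Lemma~\ref{lemma1}(4) to the entailment $\Sigma\models\phi^s$: there exist $\sigma_1,\ldots,\sigma_n\in\Sigma$ such that $\sigma_1\rightarrow\cdots\rightarrow\sigma_n\rightarrow\phi^s$ is valid. Because $\Sigma$ is a set of sentences and $\phi^s$ is a sentence, this implication is itself a sentence, so the corresponding instance of $E1$ is simply
\[
K(\sigma_1\rightarrow\cdots\rightarrow\sigma_n\rightarrow\phi^s),
\]
with no universal closure needed; by hypothesis this lies in $\Sigma$. Moreover, $\Sigma$ being closed under $K$ gives $K\sigma_i\in\Sigma$ for each $i=1,\ldots,n$.

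Now I peel off the $K$-ed hypotheses one at a time using $E2$. Each instance
\[
K(\sigma_i\rightarrow\cdots\rightarrow\sigma_n\rightarrow\phi^s)\rightarrow K\sigma_i\rightarrow K(\sigma_{i+1}\rightarrow\cdots\rightarrow\sigma_n\rightarrow\phi^s)
\]
lies in $\Sigma$ by hypothesis, so combining it with $K\sigma_i\in\Sigma$ and the previous $K$-ed implication via modus ponens (which is valid in the base logic, hence preserved by $\models$) yields, after $n$ steps, $\Sigma\models K(\phi^s)$. Translating back through the definition of $\M_\Sigma$ gives $\M_\Sigma\models KK\phi[s]$, completing the argument.

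There is no real obstacle here; the only point requiring a moment's care is the syntactic identification $(K\phi)^s\equiv K(\phi^s)$, which uses that numerals are closed terms so the substitution lemma for the first-order part poses no substitutability issues, and after that the proof is essentially the standard argument that necessitation is admissible in any system closed under $E1$, $E2$, and ``$K$ of its own axioms.''
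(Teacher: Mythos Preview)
Your proof is correct and follows essentially the same approach as the paper: unfold the definition, apply compactness (Lemma~\ref{lemma1}(4)), use $E1$ to get $K$ of the valid implication into $\Sigma$, then peel with $E2$ and closure under $K$ to obtain $\Sigma\models K\phi^s$. You are slightly more explicit than the paper in noting $(K\phi)^s\equiv K(\phi^s)$ and in spelling out the inductive peeling, but the argument is the same.
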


\begin{proof}
Assume $\M_\Sigma\models K\phi[s]$.
This means $\Sigma\models \phi^s$.
By Lemma \ref{lemma1} there are finitely many $\sigma_1,\ldots,\sigma_n\in\Sigma$
such that $\sigma_1\rightarrow\cdots\rightarrow \sigma_n\rightarrow \phi^s$
is valid.
Thus,
the universal closure of
$K(\sigma_1\rightarrow\cdots\rightarrow \sigma_n\rightarrow \phi^s)$
is an instance of $E1$, hence in $\Sigma$.
By repeated instances of $E2$ in $\Sigma$, $\Sigma$ implies the universal closure of
\[
K(\sigma_1\rightarrow\cdots\rightarrow \sigma_n\rightarrow \phi^s)
\rightarrow K\sigma_1\rightarrow \cdots \rightarrow K\sigma_n\rightarrow K\phi^s.\]
It follows that $\Sigma\models K\phi^s$,
so $\M_\Sigma\models KK\phi[s]$.
\end{proof}

\begin{definition}
By \emph{assigned validity} we mean the following schemata of $\L$-sentences:
\begin{itemize}
\item $\phi^s$, whenever $\phi$ is valid and $s$ is any assignment.
\end{itemize}
\end{definition}

\begin{lemma}
\label{assignedvaliditylem}
For any $\Sigma$ as in Definition \ref{Msubsigmadef},
if $\Sigma$ contains all instances of assigned validity,
then $\M_\Sigma$ satisfies all instances of $E1$.
\end{lemma}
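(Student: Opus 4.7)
The plan is to unfold the definition of $E1$ and reduce it immediately to membership in $\Sigma$. An instance of $E1$ is the universal closure of $K\phi$ for some valid $\phi$. To show $\M_\Sigma$ satisfies this sentence, it suffices to show $\M_\Sigma\models K\phi[s]$ for every assignment $s$ into $\N$.

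By the defining clause of $\M_\Sigma$ (Definition \ref{Msubsigmadef}), $\M_\Sigma\models K\phi[s]$ is equivalent to $\Sigma\models\phi^s$. Since $\phi$ is valid by hypothesis, for every assignment $s$ the sentence $\phi^s$ is by definition an instance of assigned validity, and therefore lies in $\Sigma$ by the assumption on $\Sigma$. Hence $\Sigma\models\phi^s$ trivially (any model of $\Sigma$ models $\phi^s$).

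Since $s$ was arbitrary, $\M_\Sigma\models K\phi[s]$ for every $s$, which is exactly what it means for $\M_\Sigma$ to satisfy the universal closure of $K\phi$. There is no real obstacle here: the lemma is essentially a bookkeeping translation between three equivalent statements, namely ``$\phi$ is valid,'' ``$\phi^s\in\Sigma$ for every $s$,'' and ``$\M_\Sigma\models K\phi[s]$ for every $s$.'' The only point worth noting is that one must quantify over all assignments $s$ (because $E1$ takes the universal closure), but this matches exactly the schematic form of assigned validity, which quantifies over all $s$ as well.
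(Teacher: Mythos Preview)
Your proof is correct and follows essentially the same route as the paper's: fix a valid $\phi$ and an arbitrary assignment $s$, observe that $\phi^s$ is an instance of assigned validity and hence lies in $\Sigma$, so $\Sigma\models\phi^s$ and therefore $\M_\Sigma\models K\phi[s]$. The paper's proof is terser but the argument is identical.
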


\begin{proof}
Suppose $\phi$ is valid and $s$ is any assignment, we will show
$\M_\Sigma\models K\phi[s]$.
Since $\phi$ is valid, $\phi^s$ is an instance of assigned validity,
so $\Sigma\models\phi^s$ by assumption.
Thus $\M_\Sigma\models K\phi[s]$.
\end{proof}

\begin{definition}
\label{sigmandef}
For every $n\in\N$, let $\Sigma(n)$ be the family of axioms consisting of the following 
$\L$-schemata.
\begin{enumerate}
\item $E1$, $E2$, and $E4$.
\item The axioms of Peano arithmetic for $\L$.
\item $\forall x (K\phi\leftrightarrow \langle 
x,\overline{\ulcorner\phi\urcorner}\rangle\in 
W_{\overline{n}})$,
$\phi$ any $\L$-formula with $FV(\phi)\subseteq\{x\}$.
\begin{itemize}
\item Here $\ulcorner\bullet\urcorner$ denotes canonical G\"{o}del number,
$\overline{\bullet}$ denotes numeral,
and
$\langle \bullet,\bullet\rangle$ abbreviates a definition (in Peano arithmetic)
of a canonical computable bijection $\N^2\to\N$.
\end{itemize}
\item Assigned validity.
\item $K\phi$, whenever $\phi$ is an instance of any of lines 1--4 or (recursively) 5.
\end{enumerate}
\end{definition}

\begin{lemma}
\label{sigmanbound}
For every $n\in\N$ and every $\phi\in\Sigma(n)$, $\phi$ is a sentence.
\end{lemma}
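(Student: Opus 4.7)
The plan is to proceed by induction on the generation of $\phi\in\Sigma(n)$, taking items 1--4 of Definition \ref{sigmandef} as base cases and item 5 as the inductive step. The key observation is that every base-case schema is either defined explicitly as a universal closure, or has all its variable positions bound by explicit quantifiers, or has its free variables substituted by closed numerals; since $FV(K\psi)=FV(\psi)$, the inductive step is then automatic.

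For the base cases I would dispatch each schema of items 1--4 separately. The schemata $E1$, $E2$, $E4$ and the induction axiom of Peano arithmetic for $\L$ are by definition universal closures, hence sentences. The remaining six axioms of Peano arithmetic are displayed with explicit outermost $\forall$ quantifiers, so they too are sentences. For item 3, the formula $\forall x(K\phi\leftrightarrow \langle x,\overline{\ulcorner\phi\urcorner}\rangle\in W_{\overline{n}})$ has its sole possibly free variable $x$ bound by the leading quantifier, since the hypothesis gives $FV(\phi)\subseteq\{x\}$, the terms $\overline{\ulcorner\phi\urcorner}$ and $\overline{n}$ are closed numerals, and the standard abbreviation $\in W_{\overline{n}}$ unpacks to a formula whose only free variable is its argument. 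For item 4, $\phi^s$ is by definition obtained by replacing every free variable of $\phi$ by a closed numeral, so it is automatically a sentence.

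For the inductive step (item 5), if $\phi$ is an instance of one of lines 1--4, or was itself produced by a previous application of item 5, then by the induction hypothesis $\phi$ is a sentence. Since $FV(K\phi)=FV(\phi)=\emptyset$, the formula $K\phi$ is also a sentence.

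There is no real obstacle here; the only point worth explicitly noting is that the recursion in item 5 is well-founded, which is immediate because the recursive clause only attaches $K$ to formulas already produced by the earlier clauses. The lemma is essentially a bookkeeping sanity check ensuring that the notation $\Sigma(n)\models\phi^s$ used in the forthcoming construction is never inadvertently applied to open formulas.
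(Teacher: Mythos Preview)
Your proof is correct and is precisely what the paper's one-line proof ``By inspection'' abbreviates: a case-by-case check that each schema in Definition~\ref{sigmandef} produces only closed formulas, with item~5 handled by the trivial induction you describe. There is nothing to add.
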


\begin{proof}
By inspection.
\end{proof}

\begin{lemma}
\label{ctthesis}
There is a total computable function $f:\N\to\N$ such that for every $n$,
\[
W_{f(n)}=\{\langle m,\ulcorner\phi\urcorner\rangle\in \N\,:\,
\mbox{$\phi$ is a formula with $FV(\phi)\subseteq\{x\}$
and $\Sigma(n)\models \phi(x|\overline{m})$}\}.
\]
\end{lemma}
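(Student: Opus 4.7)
The plan is to show that $\Sigma(n)$ is uniformly r.e.\ in $n$, then invoke Lemma \ref{lemma1}(3), and finally extract the desired set of pairs by a straightforward effective rewriting.

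First I would construct a total computable $g:\N\to\N$ with $W_{g(n)}=\{\ulcorner\phi\urcorner:\phi\in\Sigma(n)\}$. Inspecting Definition \ref{sigmandef}, lines 1, 2, and 4 are r.e.\ schemata independent of $n$: instances of $E1$, $E2$, $E4$ and of assigned validity are enumerable using Lemma \ref{lemma1}(1) (to enumerate valid formulas) combined with the computable operation of substituting numerals for free variables, while the Peano axioms for $\L$ form a standard r.e.\ set. Line 3 is r.e.\ uniformly in $n$, since from $n$ and $\ulcorner\phi\urcorner$ (restricted to $\phi$ with $FV(\phi)\subseteq\{x\}$, a decidable condition) we can effectively compute the Gödel number of $\forall x(K\phi\leftrightarrow\langle x,\overline{\ulcorner\phi\urcorner}\rangle\in W_{\overline{n}})$. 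The recursive clause 5 is handled by the usual stage-by-stage closure: at stage $s$, enumerate finitely many new axioms from lines 1--4, and for every $\phi$ enumerated so far, also enumerate $K\phi$. The whole construction is uniform in $n$, so $g$ exists by the s-m-n theorem.

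Next, Lemma \ref{lemma1}(3) applied to $g$ produces a total computable $h:\N\to\N$ with $W_{h(n)}=\{\ulcorner\psi\urcorner:\Sigma(n)\models\psi\}$. From $h$ I would then build $f$ so that $W_{f(n)}$ enumerates precisely those $\langle m,\ulcorner\phi\urcorner\rangle$ with $FV(\phi)\subseteq\{x\}$ and $\ulcorner\phi(x|\overline{m})\urcorner\in W_{h(n)}$: run the enumeration of $W_{h(n)}$ in parallel with a loop over all pairs $\langle m,\ulcorner\phi\urcorner\rangle$ satisfying the (decidable) free-variable condition, and emit such a pair as soon as $\ulcorner\phi(x|\overline{m})\urcorner$ appears. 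Since substitution of numerals and Gödel numbering are computable, another application of s-m-n yields a total $f$ with the required property.

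The only mildly subtle point is the simultaneous enumeration required by clause 5, but this is a routine inductive construction; the genuine mathematical content---that validity is r.e.\ and deductive closure is effective---has already been absorbed into Lemma \ref{lemma1}.
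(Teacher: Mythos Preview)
Your proposal is correct and is precisely a detailed unpacking of the paper's own proof, which reads in its entirety ``Follows from Lemma \ref{lemma1} and the Church-Turing Thesis.'' You have simply made explicit the uniform r.e.\ enumeration of $\Sigma(n)$, the invocation of Lemma \ref{lemma1}(3), and the effective post-processing into pairs---all of which the paper leaves implicit under the Church--Turing appeal.
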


\begin{proof}
Follows from Lemma \ref{lemma1} and the Church-Turing Thesis.
\end{proof}

\begin{corollary}
\label{kleene}
There is an $n\in\N$ such that
\[
W_n = \{\langle m,\ulcorner\phi\urcorner\rangle\in\N
\,:\,
\mbox{$\phi$ is a formula with $FV(\phi)\subseteq \{x\}$ and
$\Sigma(n)\models \phi(x|\overline{m})$}\}.
\]
\end{corollary}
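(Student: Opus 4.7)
The plan is to apply Kleene's second recursion theorem (the fixed-point theorem for $W$-indices) directly to the total computable function $f$ supplied by Lemma \ref{ctthesis}. Lemma \ref{ctthesis} tells us that for each $n$, $W_{f(n)}$ is exactly the set on the right-hand side of the displayed equation in the corollary, but with $n$ interpreted as an arbitrary parameter. What the corollary asks for is a genuine \emph{fixed point}, i.e.\ an index $n$ that describes its own behavior: a choice where the ``input'' $n$ in $\Sigma(n)$ coincides with the ``output'' index whose $W$-set we are computing.

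More concretely, I would first recall Kleene's recursion theorem in the form: for every total computable $f:\N\to\N$, there exists $n\in\N$ such that $W_n = W_{f(n)}$. Applying this to the $f$ of Lemma \ref{ctthesis} yields an $n$ with
\[
W_n \;=\; W_{f(n)} \;=\; \{\langle m,\ulcorner\phi\urcorner\rangle\in\N\,:\,\text{$\phi$ is a formula with $FV(\phi)\subseteq\{x\}$ and $\Sigma(n)\models \phi(x|\overline{m})$}\},
\]
which is precisely the equation in the statement.

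I do not anticipate any real obstacle here: the content of the corollary is entirely a diagonalization/self-reference maneuver, and all the hard work — showing that the enumeration problem for $\Sigma(n)$ is uniformly r.e.\ in $n$, so that $f$ exists and is total computable — has already been discharged in Lemma \ref{ctthesis} (which in turn leaned on the completeness/compactness package of Lemma \ref{lemma1}). The one thing worth stating carefully is that Lemma \ref{ctthesis} gives totality of $f$, which is what is required to invoke the recursion theorem in its standard form; if only partiality were available one would have to argue separately that the fixed point $n$ yields a well-defined $\Sigma(n)$, but this is not an issue here since $\Sigma(n)$ is defined for every $n$.
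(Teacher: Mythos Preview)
Your proposal is correct and matches the paper's own proof exactly: the paper simply cites Kleene's Recursion Theorem together with Lemma \ref{ctthesis}. Your additional remarks about totality of $f$ and the form of the recursion theorem for $W$-indices are accurate elaborations of what the paper leaves implicit.
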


\begin{proof}
By Kleene's Recursion Theorem and Lemma \ref{ctthesis}.
\end{proof}

\begin{proposition}
\label{bigproposition}
Let $n$ be as in Corollary \ref{kleene}.
Then $\M_{\Sigma(n)}\models \Sigma(n)$.
\end{proposition}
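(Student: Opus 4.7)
The plan is to verify that $\M_{\Sigma(n)}$ satisfies each of the five schemata listed in Definition \ref{sigmandef} by assembling the prior lemmas, with the fixed-point property of $n$ doing the critical work for line 3.

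First I would check the hypotheses needed for the earlier lemmas. By clause 5 of Definition \ref{sigmandef}, $\Sigma(n)$ is closed under $K$. By clause 4, $\Sigma(n)$ contains all instances of assigned validity. By clause 1, $\Sigma(n)$ contains all instances of $E1$ and $E2$. Hence Lemmas \ref{e2lem}, \ref{palem}, \ref{e4lem}, and \ref{assignedvaliditylem} apply directly and give that $\M_{\Sigma(n)}$ satisfies $E1$, $E2$, $E4$, and the axioms of Peano arithmetic for $\L$. This handles clauses 1 and 2.

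For clause 4 (assigned validity), if $\phi$ is valid and $s$ is any assignment, then $\M_{\Sigma(n)} \models \phi[s]$ since validity means satisfaction in every structure under every assignment; by Lemma \ref{claim1}, this gives $\M_{\Sigma(n)} \models \phi^s$, as desired. For clause 5, Lemma \ref{sigmanbound} tells us that every formula in $\Sigma(n)$ is a sentence, so for any such $\phi$ and any assignment $s$, $\phi^s = \phi$; since $\phi \in \Sigma(n)$ we trivially have $\Sigma(n) \models \phi$, hence $\Sigma(n) \models \phi^s$, hence $\M_{\Sigma(n)} \models K\phi[s]$, and so $\M_{\Sigma(n)} \models K\phi$.

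The main step, and really the whole point of invoking the Recursion Theorem, is clause 3. Fix an $\L$-formula $\phi$ with $FV(\phi) \subseteq \{x\}$ and an assignment $s$; let $m = s(x)$. I must show that $\M_{\Sigma(n)} \models K\phi[s(x|m)]$ if and only if $\M_{\Sigma(n)} \models (\langle x, \overline{\ulcorner\phi\urcorner}\rangle \in W_{\overline{n}})[s(x|m)]$. Unpacking the left side via Definition \ref{Msubsigmadef}, it amounts to $\Sigma(n) \models \phi^{s(x|m)}$, which (since $FV(\phi) \subseteq \{x\}$) is $\Sigma(n) \models \phi(x|\overline{m})$. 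Unpacking the right side, using the standard interpretations of the arithmetic symbols on $\N$, it amounts to $\langle m, \ulcorner\phi\urcorner\rangle \in W_n$. But Corollary \ref{kleene} was stated precisely to make these two conditions equivalent, so clause 3 follows. The only obstacle here is bookkeeping: making sure that $\phi^{s(x|m)}$ really is $\phi(x|\overline{m})$ under the hypothesis $FV(\phi) \subseteq \{x\}$, and that the coded pair $\langle x, \overline{\ulcorner\phi\urcorner}\rangle$ evaluates, under $s(x|m)$, to the natural number $\langle m, \ulcorner\phi\urcorner\rangle$ named in the definition of $W_n$.
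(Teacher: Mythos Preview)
Your proposal is correct and follows essentially the same route as the paper: you verify the five clauses of Definition~\ref{sigmandef} by invoking Lemmas~\ref{e2lem}, \ref{palem}, \ref{e4lem}, and \ref{assignedvaliditylem} for clauses 1--2, handle assigned validity via Lemma~\ref{claim1}, handle clause 5 via Lemma~\ref{sigmanbound}, and use the fixed-point property from Corollary~\ref{kleene} for clause 3, exactly as the paper does. The only cosmetic difference is that you write $s(x|m)$ with $m=s(x)$ where the paper simply writes $s$; since these are the same assignment, nothing is lost.
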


\begin{proof}
For brevity, write $\Sigma$ for $\Sigma(n)$ and $\M$ for $\M_{\Sigma(n)}$.

\item
\claim{1}
$\M$ satisfies all instances of $E1$.  By Lemma \ref{assignedvaliditylem}.

\item
\claim{2}
$\M$ satisfies all instances of $E2$.  By Lemma \ref{e2lem}.

\item
\claim{3}
$\M$ satisfies all instances of $E4$.  By Lemma \ref{e4lem}.

\item
\claim{4}
$\M$ satisfies the axioms of Peano arithmetic for $\L$.
By Lemma \ref{palem}.

\item
\claim{5}
For any $\L$-formula $\phi$ with $FV(\phi)\subseteq\{x\}$,
$\M$ satisfies $\forall x(K\phi\leftrightarrow \langle 
x,\overline{\ulcorner\phi\urcorner}\rangle\in W_{\overline{n}})$.

Let $s$ be an arbitrary assignment (say with $s(x)=m$), we must show $\M\models K\phi[s]$
if and only if $\M\models \langle
x,\overline{\ulcorner\phi\urcorner}\rangle\in W_{\overline{n}}[s]$.
The following are equivalent:
\begin{align*}
\M &\models K\phi[s]\\
\Sigma &\models \phi^s &\mbox{(Definition of $\M$)}\\
\Sigma &\models \phi(x|\overline{m}) &\mbox{(Since $\FV(\phi)\subseteq\{x\}$)}\\
\langle m,\ulcorner\phi\urcorner\rangle &\in W_n
&\mbox{(By choice of $n$ (Corollary \ref{kleene}))}\\
\M &\models \langle \overline{m},\overline{\ulcorner\phi\urcorner}\rangle\in W_{\overline{n}}
&\mbox{(Since $\M$ has standard first-order part)}\\
\M &\models (\langle x,\overline{\ulcorner\phi\urcorner}\rangle\in W_{\overline{n}})^s
&\mbox{(Since $s(x)=m$)}\\
\M &\models \langle x,\overline{\ulcorner\phi\urcorner}\rangle\in W_{\overline{n}}[s].
&\mbox{(By Lemma \ref{claim1})}
\end{align*}
%as desired.

\item
\claim{6}
$\M$ satisfies all instances of assigned validity.
Suppose $\phi$ is valid and $s$ is an assignment, we must show $\M\models \phi^s$.
By Lemma \ref{claim1}, it suffices to show $\M\models\phi[s]$.
But this is immediate, because $\phi$ is valid.

\item
\claim{7}
$\M\models K\phi$ whenever $K\phi$ is an instance of line 5 from Definition
\ref{sigmandef}.
For any such $K\phi$, $\phi$ itself lies in $\Sigma$,
so $\Sigma\models\phi$.
Let $s$ be any assignment.
By Lemma \ref{sigmanbound}, $\phi$ is a sentence, thus $\phi^s=\phi$
and so $\Sigma\models\phi^s$, meaning $\M\models K\phi[s]$.
\end{proof}

\begin{theorem}
Let $n$ be as in Corollary \ref{kleene}.
\begin{enumerate}
\item $\M_{\Sigma(n)}$ satisfies the axioms of epistemic arithmetic mod factivity.
\item $\M_{\Sigma(n)}$ satisfies all instances of Reinhardt's schema, that is,
\[\exists e K\forall x(K\phi\leftrightarrow x\in W_e)\] whenever
$\FV(\phi)\subseteq\{x\}$.
\item Additionally, there is a fixed $m\in\N$ such that $\M_{\Sigma(n)}$
satisfies the schema $K(K\phi\leftrightarrow \ulcorner\phi\urcorner\in W_{\overline{m}})$,
where $\phi$ ranges over $\L$-sentences.
\end{enumerate}
Thus, the machine that knows the things known by $\M_{\Sigma(n)}$
is a machine that knows its own code.
\end{theorem}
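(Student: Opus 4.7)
The plan is to read almost everything off of Proposition \ref{bigproposition}, which already supplies $\M_{\Sigma(n)}\models\Sigma(n)$, and then plug a few small remaining gaps. For (1), the schemata $E1$, $E2$, $E4$, the axioms of Peano arithmetic for $\L$, and all of their $K$-versions sit literally inside $\Sigma(n)$ by lines 1, 2, and 5 of Definition \ref{sigmandef}, so Proposition \ref{bigproposition} hands them over at once. The only pre-closure axiom left to verify is factivity $E3$: $K\phi\rightarrow\phi$. I would argue this directly: if $\M\models K\phi[s]$ then by definition $\Sigma(n)\models\phi^s$, and since $\M\models\Sigma(n)$ this forces $\M\models\phi^s$, whence Lemma \ref{claim1} gives $\M\models\phi[s]$.

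For (2), the idea is to reindex via Church--Turing. Given $\phi$ with $\FV(\phi)\subseteq\{x\}$, line 3 of Definition \ref{sigmandef} already supplies $\Sigma(n)\models\forall x(K\phi\leftrightarrow\langle x,\overline{\ulcorner\phi\urcorner}\rangle\in W_{\overline n})$. I would then pick $e\in\N$ (primitive-recursively from $n$ and $\ulcorner\phi\urcorner$) with $W_e=\{k:\langle k,\ulcorner\phi\urcorner\rangle\in W_n\}$, so that PA proves $\forall x(x\in W_{\overline e}\leftrightarrow\langle x,\overline{\ulcorner\phi\urcorner}\rangle\in W_{\overline n})$. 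Combining the two equivalences inside $\Sigma(n)$ yields $\Sigma(n)\models\forall x(K\phi\leftrightarrow x\in W_{\overline e})$; since the formula in question is a sentence, this is exactly $\M\models K\forall x(K\phi\leftrightarrow x\in W_{\overline e})$, witnessing the existential in Reinhardt's schema at $e$.

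For (3) I would fix $m$ uniformly in all sentences, depending only on $n$, by choosing $m$ with $W_m=\{k:\langle 0,k\rangle\in W_n\}$. For each $\L$-sentence $\phi$, instantiating $x=0$ in the line 3 axiom (a trivial substitution, since $\phi$ has no free variables) yields $\Sigma(n)\models K\phi\leftrightarrow\langle\overline{0},\overline{\ulcorner\phi\urcorner}\rangle\in W_{\overline n}$; specializing PA's proof of $\forall y(y\in W_{\overline m}\leftrightarrow\langle\overline{0},y\rangle\in W_{\overline n})$ at $y=\overline{\ulcorner\phi\urcorner}$ then gives $\Sigma(n)\models K\phi\leftrightarrow\ulcorner\phi\urcorner\in W_{\overline m}$, which since the biconditional is a sentence amounts to $\M\models K(K\phi\leftrightarrow\ulcorner\phi\urcorner\in W_{\overline m})$, as required.

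The step I expect to need the most care is verifying the two PA-provable $\Sigma_1$-reindexings of $W_e$ that underlie parts (2) and (3). These are standard Kleene $s$-$m$-$n$ arguments together with PA's ability to prove true $\Sigma_1$ equivalences, but one should pin down exactly how the predicate $x\in W_e$ is formalized in $\Sigma(n)$ (via a Kleene $T$-predicate and the pairing function $\langle\cdot,\cdot\rangle$) before the reindexing equivalences can be quoted as PA-provable. Once that is done, the rest of the theorem is bookkeeping on top of Proposition \ref{bigproposition} and the definition of $\M_\Sigma$.
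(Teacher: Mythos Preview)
Your proof is correct and follows essentially the same route as the paper: part (1) is handled identically (Proposition~\ref{bigproposition} plus the direct verification of $E3$ via $\Sigma(n)\models\phi^s\Rightarrow\M\models\phi^s\Rightarrow\M\models\phi[s]$), and for parts (2) and (3) the paper simply notes that lines 3 and 5 of Definition~\ref{sigmandef} place $K\forall x(K\phi\leftrightarrow\langle x,\overline{\ulcorner\phi\urcorner}\rangle\in W_{\overline n})$ in $\Sigma(n)$ and declares that the conclusions ``follow.'' Your version differs only in that you perform the $s$-$m$-$n$ reindexing explicitly inside $\Sigma(n)$ (using line 2) and then pass to the $K$-prefixed form via the definition of $\M_\Sigma$ rather than via line 5; this is exactly the work the paper's ``follow'' is suppressing, and your caution about pinning down the PA-provable reindexing is well placed but standard.
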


\begin{proof}
\item
(1). The only axiom schema that remains to be proven is $E3$, the universal closures of formulas of 
the form $K\phi\rightarrow\phi$.  Suppose $s$ is any assignment and $\M_{\Sigma(n)}\models 
K\phi[s]$.
This means $\Sigma(n)\models\phi^s$.  By Proposition \ref{bigproposition},
$\M_{\Sigma(n)}\models\Sigma(n)$, therefore $\M_{\Sigma(n)}\models\phi^s$.
By Lemma \ref{claim1}, $\M_{\Sigma(n)}\models\phi[s]$, establishing (1).

\item
(2) and (3). By combining lines 3 and 5 of Definition \ref{sigmandef},
$\Sigma(n)$ contains $K\forall x(K\phi\leftrightarrow
\langle x,\overline{\ulcorner\phi\urcorner}\rangle\in W_{\overline{n}})$ whenever $\phi$
is an $\L$-formula with $FV(\phi)\subseteq\{x\}$.
(2) and (3) follow.
\end{proof}

\section{Conclusion and Related Work}

A knowing machine (implicitly meaning, a knowing machine
that knows its own factivity) cannot know its own code.
Carlson showed that such a machine can know that it has \emph{some} code (without
knowing exactly which).  Our result complements Carlson's: it is possible for a machine
to know its code quite precisely, at the price of knowing its factivity
(despite really being factive).

In our dissertation \cite{alexanderdissert} we explore related issues surrounding
multiple interacting machines.
Suppose $\prec$ is an r.e.~well-founded partial ordering of $\N$.
\begin{itemize}
\item There are machines $M_0,M_1,\ldots$ such that
each $M_i$ knows precise codes of each $M_j$, and knows factivity of $M_j$
when $j\prec i$.
\item There are machines $M_0,M_1,\ldots$ such that
each $M_i$ knows precise codes of $M_j$ when $j\prec i$;
factivity of $M_j$ when $j\preceq i$; and each $M_i$ knows that each $M_j$
has some code (without necessarily knowing which).
\item There are machines $M_0,M_1,\ldots$ such that
each $M_i$ knows precise codes of $M_j$ ($j\prec i$);
factivity of $M_j$ ($j\preceq i$); a slight weakening of factivity of $M_j$ (all $j$);
and that $M_j$ has some code ($j\preceq i$).
\item But if $\prec$ is ill-founded,
there are no such machines as above, provided the machines are also required
to know rudimentary facts about computable ordinals.
\end{itemize}
We are preparing a streamlined paper on these
results for journal submission.

\end{document}